\newtheorem{definition}{Definition}[section]
\newtheorem{theorem}[definition]{Theorem}
\newtheorem{corol}[definition]{Corollary}
\newcommand{\F}{\mathbb{F}}
\newcommand{\fq}{\mathbb{F}_q}
\newcommand{\rmv}[1]{}
\title{Index bounds for character sums with polynomials over finite fields}
\author{Daqing Wan and Qiang Wang}
\thanks{
Research of authors was partially supported
by NSF and NSERC of Canada.}
\address{Department of Mathematics\\
University of California\\
Irvine,  CA 92697-3875}
\email{dwan@math.uci.edu}
\address{School of Mathematics and Statistics\\
Carleton University\\
1125 Colonel By Drive\\
Ottawa, ON K1S 5B6\\
Canada}
\email{wang@math.carleton.ca}
\keywords{\noindent character sums, polynomials, finite fields, Artin-Schreier, cyclic codes}
\subjclass[2000]{11T24}
\begin{document}

\begin{abstract}
We provide an index bound for character sums of polynomials over finite fields. This improves the Weil bound for high degree polynomials with small indices, as well as polynomials with large indices that are generated by cyclotomic mappings of small indices.  As an application, we also give some general  bounds for numbers of solutions of some Artin-Schreier equations and mininum weights of some cyclic codes. 
\end{abstract}

\maketitle

%\section{Index bound}
\section{Introduction}

Let $g(x)$ be a polynomial of degree $n > 0$ and $\psi :\fq \rightarrow \mathbb{C}^*$ be a nontrivial additive character.  If $g(x)$ is not of the form $c+f^p-f$ for some $f(x) \in \fq[x]$ and constant $c\in \fq$, then 
\begin{equation}\label{weilbound}
\left|  \sum_{x\in \fq} \psi(g(x)) \right| \leq (n-1) \sqrt{q}. 
\end{equation}
This is the case if the degree $n$ is not divisible by $p$. 
The upper bound in Equation (\ref{weilbound}) is well known as  the Weil bound. In 1996,  Stepanov \cite{Stepanov}  stated the following problem for additive characters.

\medskip
\noindent  {\bf Problem 1.}\label{StepanovProbm} Determine the class of polynomials $g(x) \in \fq[x]$ 
of degree $n$, $1\leq n \leq q-1$  for which the upper bound (\ref{weilbound}) 
can be sharpened and the absolute value of the Weil sum can be estimated non-trivially for $n \geq \sqrt{q} +1$. 

\medskip
It is well known that every polynomial $g$ over $\mathbb{F}_q$ such that $g(0) =b$
has the form $ax^rf(x^s)+b$ with some positive integers
 $r, s$ such that $s\mid (q-1)$. There are different ways to choose $r, s$ in the form
$ax^rf(x^s)+b$.   However,  in \cite{AGW:09},  the concept of the index of a polynomial over a finite field was first introduced
and any  non-constant  polynomial $g\in\mathbb{F}_q[x]$ of
degree $n \leq q-1$  can be
written {\it uniquely} as
$g(x) = a(x^rf(x^{(q-1)/\ell}))+b$
with  index $\ell$ defined below.
Namely, write
$$g(x)=a(x^n+a_{n-i_1} x^{n-i_1}+\cdots+a_{n-i_k} x^{n-i_k})+b,$$
where $a,~a_{n-i_j}\neq 0$, $j=1, \dots, k$.  
%Hence we suppose $k\geq 1$. 
Let $r$ be the lowest degree of $x$ in $g(x)-b$.
Then $g(x)=a\left(x^r f(x^{(q-1)/\ell}) \right)+b,$ where $f(x)=
x^{e_0}+a_{n-i_1} x^{e_1}+\cdots+ a_{n-i_{k-1}}x^{e_{k-1}} + a_{r}
$,
 $$\ell=\frac{q-1}{\gcd(n-r,n-r-i_1,\dots, n-r-i_{k-1}, q-1)} := \frac{q-1}{s},$$
and $\gcd(e_0, e_1, \dots, e_{k-1}, \ell)=1 .$ The integer $\ell=\frac{q-1}{s}$ is called the
  {\it index} of $g(x)$.  In particular, when $k=0$, we note that any polynomial $ax^r+b$ has the index $\ell =1$.  From the above definition of index $\ell$, one can see that the greatest common divisor condition makes $\ell$ minimal among those possible choices.   The index of a polynomial  is closely related to the concept of the least index of a cyclotomic mapping polynomial  \cite{Evans:92,NW:05,Wang}.
Let  $\gamma$ is a fixed primitive element of $\mathbb{F}_q$. Let $\ell \mid (q-1)$ and the set of all nonzero $\ell$-th powers be $C_0$. Then $C_0$ is a subgroup of $\mathbb{F}_q^*$ of
index $\ell$. The elements of the factor group $\mathbb{F}_q^*/C_0$
are the {\it cyclotomic cosets of index $\ell$}
$$ C_i := \gamma^i C_0, \  \  \  i = 0, 1, \cdots, \ell-1.$$

For any $a_0, a_1, \cdots, a_{\ell-1} \in \mathbb{F}_q$ and a positive integer $r$,   the  {\it $r$-th order cyclotomic mapping $f^{r}_{a_0, a_1, \cdots,
a_{\ell-1}}$ of index $\ell$ }  from $\mathbb{F}_q$ to itself  (see Niederreiter and Winterhof in \cite{NW:05} for $r=1$ or  Wang \cite{Wang}) is defined by
\begin{equation}\label{CycloMappingDef}
f^{r}_{a_0, a_1, \ldots,
a_{\ell-1}} (x) =
\left\{
\begin{array}{ll}
0, &   \mbox{if} ~ x=0; \\
a_i x^{r},   &   \mbox{if} ~x \in C_i, ~ 0\leq i \leq \ell-1. \\
\end{array}
\right.
\end{equation}

It is shown that  $r$-th order cyclotomic mappings of index $\ell$ produce the polynomials of the form  $x^r f(x^s)$ where $s =\frac{q-1}{\ell}$. Indeed,  the polynomial presentation is given by
\begin{equation*}
g(x) =\frac{1}{\ell} \sum_{j=0}^{\ell-1} \left( \sum_{i=0}^{\ell-1}  a_i \zeta^{-ji}\right) x^{js+r}, %\left(\zeta^{-(\ell-1)i}x^{(\ell-1)s} + \zeta^{-(\ell-2)i} x^{(\ell-2)s} +\cdots + \zeta^{-i} x^s + 1 \right),
\end{equation*}
where $\zeta =\gamma^s$ is a fixed  primitive $\ell$-th root of unity. On the other hand, as we mentioned earlier, each polynomial $f(x)$ such that $f(0)=0$ with index $\ell$ can be written as $x^r f(x^{(q-1)/\ell})$, which is an $r$-th order cyclotomic mapping with the least index $\ell$ such that $a_i = f(\zeta^i)$ for $i=0, \ldots, \ell-1$. Obviously, the index of a polynomial can be very small for a polynomial with large degree. 

The concept of index of polynomials over finite fields appears quite useful. Recently index approach was used to study  permutation polynomials \cite{Wang2}, as well as the upper bound of value sets of polynomials over finite fields when they are not permutation polynomials \cite{MWW2}. In this paper we first provide an index bound for character sums of arbitrary polynomials. 

\begin{theorem}\label{indexThm}
Let $g(x) = x^r f(x^{(q-1)/\ell}) + b $ be any polynomial with index $\ell$. Let $\zeta$ be a primitive $\ell$-th root of unity and $n_0 = \# \{ 0 \leq i \leq \ell -1 \mid f(\zeta^i) =0 \}$. Let  $\psi :\fq \rightarrow \mathbb{C}^*$ be a nontrivial additive character.  Then 
\begin{equation}\label{indexbound}
\left|  \sum_{x\in \fq} \psi(g(x)) - \frac{q}{\ell} n_0 \right| \leq (\ell-n_0)\gcd(r, \frac{q-1}{\ell}) \sqrt{q}. 
\end{equation}
\end{theorem}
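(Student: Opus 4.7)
The plan is to exploit the cyclotomic coset structure encoded in the index $\ell$: on each coset $C_i$ the polynomial $f(x^{(q-1)/\ell})$ is the constant $f(\zeta^i)$, so the character sum reduces to $\ell$ monomial exponential sums, each of which is trivial or can be handled by Gauss sums.

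Set $s = (q-1)/\ell$ and decompose $\fq^* = \bigsqcup_{i=0}^{\ell-1} C_i$. Since $g(x) = x^r f(\zeta^i) + b$ on $C_i$, we have
$$\sum_{x \in \fq}\psi(g(x)) = \psi(b)\Bigl[1 + \sum_{i=0}^{\ell-1}\sum_{x \in C_i}\psi(x^r f(\zeta^i))\Bigr].$$
The $n_0$ indices $i$ with $f(\zeta^i) = 0$ contribute exactly $n_0 s$ to the bracket, so the task is to estimate the remaining $\ell - n_0$ inner sums for $a_i := f(\zeta^i) \neq 0$.

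For such $i$, parametrize $C_i = \gamma^i (\fq^*)^\ell$, with each element of $C_i$ realized $\ell$ times as $\gamma^i y^\ell$, so that
$$\sum_{x \in C_i}\psi(x^r a_i) = \frac{1}{\ell}\sum_{y \in \fq^*}\psi(c_i y^{r\ell}), \qquad c_i := \gamma^{ri} a_i \neq 0.$$
Put $e := \gcd(r, s)$ and note $\gcd(r\ell, q-1) = \ell e$. The standard multiplicative-character expansion, using the fact that $y \mapsto y^{r\ell}$ is $\ell e$-to-one onto the $d$-th powers, gives
$$\sum_{y \in \fq^*}\psi(c_i y^{r\ell}) = -1 + \sum_{\substack{\chi \neq 1 \\ \chi^{\ell e} = 1}}\bar\chi(c_i)\, G(\chi,\psi),$$
where $G(\chi,\psi) = \sum_{z \in \fq^*}\chi(z)\psi(z)$. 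The classical Gauss sum bound $|G(\chi,\psi)| = \sqrt{q}$ for nontrivial $\chi$ then yields
$$\sum_{x \in C_i}\psi(x^r a_i) = -\frac{1}{\ell} + \eta_i, \qquad |\eta_i| \leq \frac{\ell e - 1}{\ell}\sqrt{q}.$$

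Assembling: the constant pieces $1 + n_0 s - (\ell - n_0)/\ell$ collapse to $n_0 q/\ell$ via the arithmetic identity $\ell + n_0(q-1) - (\ell - n_0) = n_0 q$, so
$$\sum_{x \in \fq}\psi(g(x)) = \psi(b)\Bigl[\frac{n_0 q}{\ell} + \sum_{i : a_i \neq 0}\eta_i\Bigr],$$
with error bounded by $(\ell - n_0)\frac{\ell e - 1}{\ell}\sqrt{q} \leq (\ell - n_0)\,e\,\sqrt{q}$. This yields the stated inequality (taking $b = 0$, i.e., $\psi(b) = 1$, which one may do by a shift, since $\sum \psi(g) = \psi(b)\sum\psi(g-b)$). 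The principal obstacle is the bookkeeping of constants in this final step: the apparently innocuous $-1/\ell$ produced by the trivial multiplicative character in each of the $\ell - n_0$ nonzero cosets must telescope precisely with the $x=0$ term and the zero-coset contributions to recover the clean main term $n_0 q/\ell$; the only analytic ingredient is the Gauss sum modulus $\sqrt{q}$.
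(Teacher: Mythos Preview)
Your argument is correct and is essentially the paper's own proof: split $\fq^*$ into the $\ell$ cyclotomic cosets $C_i$, use that $g-b$ restricts to $a_ix^r$ on $C_i$, parametrize $C_i=\gamma^i(\fq^*)^\ell$, and bound each resulting sum $\sum_y\psi(c_iy^{r\ell})$ by $(\ell e-1)\sqrt q$ with $e=\gcd(r,s)$. The only cosmetic difference is in this last step: you expand via multiplicative characters and use $|G(\chi,\psi)|=\sqrt q$, whereas the paper first substitutes $y\mapsto y^k$ (with $\gcd(k,q-1)=1$) to reduce the exponent to $\gcd(r\ell,q-1)=\ell e$ and then cites the Weil bound; for monomials these two routes give the identical estimate.

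One caveat: your parenthetical about $b$ does not actually close the argument. The shift gives $\sum_x\psi(g(x))=\psi(b)\bigl(\tfrac{n_0q}{\ell}+E\bigr)$, so subtracting the \emph{real} number $\tfrac{n_0q}{\ell}$ leaves an uncontrolled term $(\psi(b)-1)\tfrac{n_0q}{\ell}$ whenever $n_0>0$ and $\psi(b)\neq1$. The paper has the very same gap (Theorem~\ref{indexThm} is deduced from Theorem~\ref{main}, whose proof assumes $g(0)=0$), so the inequality as written should really be read with $b=0$; for general $b$ the correct main term is $\psi(b)\,n_0q/\ell$.
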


This implies that for many polynomials of large degree  with small indices (for which the Weil bound becomes trivial), we have nontrivial bound for the character sum 
in terms of indices.  %Without of loss generality, we assume $g(0) =0$. 

Moreover, we note that many classes of polynomials with large indices  $\ell$ (e.g., $\ell =q-1$) can be defined through cyclotomic cosets of smaller index $d$ that is also a divisor of  $q-1$. Indeed, in \cite{Wang2}, we studied a general class of polynomials of the form 
\begin{equation}\label{GenCycloMappingPoly}
g(x) = \frac{1}{d} \sum_{i=0}^{d-1} \sum_{j=0}^{d-1} \zeta^{-ji} x^{j(q-1)/d} f_i(x^{(q-1)/d}) R_i(x),
\end{equation}
where $f_i(x)$ and $R_i(x)$ are arbitrary polynomials for each $0\leq i \leq d-1$ and $\zeta$ is a primitive $d$-th root of unity.  
Here we abuse the notation and let $C_0$ be a subgroup of $\fq^*$ with index $d$ and $C_i = \gamma^i C_0$, $i=0, \ldots, d-1$ be all cyclotomic cosets of index $d$. 
Equivalently, $g$ is defined by
\begin{equation}\label{GenCycloMappingDef}
%f^{R_0(x), R_1(x), \ldots, R_{d-1}(x)}_{a_0, a_1, \ldots, a_{d-1}} (x) 
g(x) =
\left\{
\begin{array}{ll}
0, &   \mbox{if} ~ x=0; \\
a_i R_i(x),   &   \mbox{if} ~x \in C_i, ~ 0\leq i \leq d-1, \\
\end{array}
\right.
\end{equation}
where $a_i = f_i(\zeta^i)$  for $0\leq i \leq d-1$ and $\zeta$ is a primitive $d$-th root of unity.  Without loss of generality, we assume that each $R_i(x)$ is a nonzero polynomial and $f_i(x)$ can be a zero polynomial.  

More generally, we obtain

\begin{theorem}\label{main}
Let $d \mid (q-1)$ and $g(x) \in \fq[x]$ be a polynomial defined by 
\begin{equation*}
g(x) =
\left\{
\begin{array}{ll}
0, &   \mbox{if} ~ x=0; \\
a_i R_i(x),   &   \mbox{if} ~x \in C_i, ~ 0\leq i \leq d-1, \\
\end{array}
\right.
\end{equation*}
where $a_i \in \fq$, $0\neq R_i(x) \in \fq[x]$, $R_i(0)=0$, and  $C_i$ is the $i$-th cyclotomic coset of index  $d$  for $0\leq i \leq d-1$.
Let  $L= \{0 \leq i \leq d-1 \mid a_i \neq 0\}$ and $n_0 = d - | L |$. 
If the degree $r_i$ of each nonzero polynomial $R_i(x)$ satisfies that $\gcd(r_i, p)=1$ for each $i \in L$ and $r = \max \{r_i \mid i \in L \}$, then  we have 
\begin{equation}\label{indexbound}
\left|  \sum_{x\in \fq} \psi(g(x)) - \frac{q}{d} n_0 \right| \leq (d-n_0)r \sqrt{q}. 
\end{equation}

Moreover,  if $R_i(x) = x^{r_i}$ for $0\leq i\leq d-1$, then we have

\begin{equation}\label{indexbound2}
\left|  \sum_{x\in \fq} \psi(g(x)) - \frac{q}{d} n_0 \right| \leq (d-n_0)\max_{ i\in L} \{ \gcd(r_i, \frac{q-1}{d}) \} \sqrt{q}. 
\end{equation}
\end{theorem}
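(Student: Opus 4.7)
The plan is to split $\sum_{x\in\fq}\psi(g(x))$ via the partition $\fq=\{0\}\cup\bigsqcup_{i=0}^{d-1}C_i$. For indices $i\notin L$, $a_i=0$, so $\psi(g(x))=1$ on $C_i$ and the coset contributes $|C_i|=(q-1)/d$. Together with $\psi(g(0))=1$, these trivial cosets sum to $1+n_0(q-1)/d$, and what remains is to control $\sum_{i\in L}\sum_{x\in C_i}\psi(a_iR_i(x))$.

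For each $i\in L$, I parameterize $C_i$ by the $d$-to-$1$ surjection $y\mapsto\gamma^iy^d$ from $\fq^*$ onto $C_i$, obtaining
\[
\sum_{x\in C_i}\psi(a_iR_i(x)) = \frac{1}{d}\sum_{y\in\fq^*}\psi\bigl(P_i(y)\bigr),\qquad P_i(y):=a_iR_i(\gamma^iy^d)\in\fq[y].
\]
Since $R_i(0)=0$ forces $P_i(0)=0$, we get $\sum_{y\in\fq^*}\psi(P_i(y))=S_i-1$ where $S_i:=\sum_{y\in\fq}\psi(P_i(y))$. A short calculation then shows the $|L|$ copies of $-1/d$ combine with $1+n_0(q-1)/d$ to produce exactly $\tfrac{q}{d}n_0$, leaving
\[
\sum_{x\in\fq}\psi(g(x))-\tfrac{q}{d}n_0 \;=\; \tfrac{1}{d}\sum_{i\in L}S_i.
\]

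To bound $S_i$, I invoke the Weil bound \eqref{weilbound}. The polynomial $P_i$ has degree $r_id$; since $d\mid(q-1)$ forces $\gcd(d,p)=1$, and $\gcd(r_i,p)=1$ by hypothesis, we have $\gcd(r_id,p)=1$, so $P_i$ is not of the form $c+f^p-f$, and $|S_i|\leq(r_id-1)\sqrt{q}$. Summing over $i\in L$ yields
\[
\Bigl|\sum_{x\in\fq}\psi(g(x))-\tfrac{q}{d}n_0\Bigr| \;\leq\; \tfrac{1}{d}\sum_{i\in L}(r_id-1)\sqrt{q} \;\leq\; r(d-n_0)\sqrt{q},
\]
which is the first bound.

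For the monomial refinement $R_i(x)=x^{r_i}$, $P_i(y)=a_i\gamma^{ir_i}y^{r_id}$ is itself a monomial. I then use the classical identity $\sum_{y\in\fq}\psi(by^m)=\sum_{y\in\fq}\psi(by^{\gcd(m,q-1)})$, justified because on $\fq^*$ both maps $y\mapsto y^m$ and $y\mapsto y^{\gcd(m,q-1)}$ have the same image and the same fibre sizes. This reduces the Weil estimate to a monomial of degree $\gcd(r_id,q-1)=d\gcd(r_i,(q-1)/d)$, giving $|S_i|\leq(d\gcd(r_i,(q-1)/d)-1)\sqrt{q}$, and summing over $L$ delivers \eqref{indexbound2}. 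The main obstacle throughout is simply bookkeeping: verifying that the shift from cosets to $\fq^*$ (factors of $1/d$) and from $\fq^*$ to $\fq$ (additive $-1$ per $i\in L$) cancels cleanly against the trivial-coset contribution $1+n_0(q-1)/d$ to yield exactly the advertised $\tfrac{q}{d}n_0$ term; there is no genuine analytic difficulty beyond a direct application of Weil to the pulled-back polynomial.
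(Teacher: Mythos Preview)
Your proof is correct and follows essentially the same route as the paper's: partition $\fq^*$ into the cosets $C_i$, parametrize each $C_i$ via the $d$-to-$1$ map $y\mapsto\gamma^i y^d$ from $\fq^*$, and apply the Weil bound to the pulled-back polynomial $P_i(y)=a_iR_i(\gamma^iy^d)$ of degree $r_id$ (coprime to $p$), with the monomial case handled by the standard degree reduction to $\gcd(r_id,q-1)$. The only cosmetic difference is that the paper packages the bookkeeping as $\frac{1}{d}\sum_{i=0}^{d-1}\bigl(1+\sum_{y\in\fq^*}\psi(\cdots)\bigr)$ from the outset, whereas you separate the $x=0$ term and the trivial cosets first; the arithmetic is identical.
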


We note that the conditions $R_i(0) =0$ for $0\leq i \leq d-1$ in the above theorem are only used to normalize the polynomial in the proof. Moreover, a slightly looser upper bound $(d-n_0)r \sqrt{q}$ instead of $\frac{(d-n_0)(dr -1)}{d} \sqrt{q}$ is presented in the result for the sake of simplicity.  In fact, without the restrictions on the values of $R_i(x)$ at $0$, we still have the same bound as follows:

\begin{equation}\label{indexbound2}
\left|  \sum_{x\in \fq^*} \psi(g(x)) - \frac{q-1}{d} n_0 \right| \leq (d-n_0)r \sqrt{q}, 
\end{equation}
where the sum runs over all non-zero elements in $\fq$.  
Therefore we obtain nontrivial bounds  for polynomials defined by (\ref{GenCycloMappingDef}) if either each $R_i(x) = x^{r_i}$ is a suitable monomial or each $R_i(x)$ is a low degree polynomial. In Section~\ref{sec2}, we prove our main results. As a consequence,  index bounds of the number of solutions of a certain Artin-Schreier equation and minimum weights of some cyclic codes are derived in Section~\ref{sec3}.

\section{Proof of theorems and some consequences}\label{sec2}

We note that Theorem~\ref{indexThm} is a corollary of the second part of Theorem~\ref{main} when $d=\ell$ and all $r_i$'s are the same. Therefore it is enough to prove Theorem~\ref{main}. Because of the equivalence of equations (\ref{GenCycloMappingPoly}) and  (\ref{GenCycloMappingDef}), we prove  the following equivalent result.

\begin{theorem} \label{largeindexmonomial} %\sum_{i=0}^{d-1} \sum_{j=0}^{d-1} \zeta^{-ji}x^{r_i+js}
Let $g(x)  = \frac{1}{d} \sum_{i=0}^{d-1} \sum_{j=0}^{d-1} \zeta^{-ji} x^{js} f_i(x^{(q-1)/d})R_i(x)$  for some $d \mid (q-1)$ and $s = \frac{q-1}{d}$ such that $R_i(0) =0$ for $1\leq i \leq d$.  Let $\zeta$ be a primitive $d$-th root of unity and $n_0 = d -|L|$ where $L= \{ 0 \leq i \leq d -1 \mid f_i(\zeta^i)  \neq 0 \}$. Let  $\psi :\fq \rightarrow \mathbb{C}^*$ be a nontrivial additive character. If the degree $r_i$ of each nonzero polynomial $R_i(x)$ satisfies that $\gcd(r_i, p)=1$ for each $i \in L$ and $r = \max \{r_i \mid i \in L \}$, then  
\begin{equation}\label{indexbound}
\left|  \sum_{x\in \fq} \psi(g(x)) - \frac{q}{d} n_0 \right| \leq (d-n_0)r \sqrt{q}. 
\end{equation}

Moreover,  if $R_i(x) = x^{r_i}$ for $0\leq i\leq d-1$, then we have

\begin{equation}\label{indexbound2}
\left|  \sum_{x\in \fq} \psi(g(x)) - \frac{q}{d} n_0 \right| \leq (d-n_0)\max_{ i\in L} \{ \gcd(r_i, \frac{q-1}{d}) \} \sqrt{q}. 
\end{equation}

\end{theorem}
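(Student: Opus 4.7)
The plan is to split the character sum across the cyclotomic cosets $C_0,\ldots,C_{d-1}$ using the equivalent piecewise description $g(x)=a_iR_i(x)$ for $x\in C_i$ and $g(0)=0$, and then on each coset pull back to $\fq^*$ via the $d$-to-$1$ parametrisation $y\mapsto\gamma^iy^d$. This turns $\sum_{x\in C_i}\psi(a_iR_i(x))$ into $\frac{1}{d}\sum_{y\in\fq^*}\psi(a_iR_i(\gamma^iy^d))$. Cosets with $i\notin L$ contribute $|C_i|=(q-1)/d$ each (since $a_i=0$ makes the summand constantly $\psi(0)=1$), for a combined contribution of $n_0(q-1)/d$. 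The hypothesis $R_i(0)=0$ lets each $y$-sum be extended from $\fq^*$ to $\fq$ at the cost of one $\psi(0)=1$ term per $i\in L$. A short bookkeeping combining the $\psi(g(0))=1$ at $x=0$, the $|L|=d-n_0$ boundary corrections, and the $n_0(q-1)/d$ coset contribution collapses cleanly into the target shift $\tfrac{q}{d}n_0$, producing the identity
\[
\sum_{x\in\fq}\psi(g(x))-\frac{q}{d}n_0=\frac{1}{d}\sum_{i\in L}\sum_{y\in\fq}\psi\bigl(a_iR_i(\gamma^iy^d)\bigr).
\]

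For the first (general) bound, I would apply the Weil bound (\ref{weilbound}) to each inner sum with $P_i(y)=a_iR_i(\gamma^iy^d)$ of degree $r_id$. Since $d\mid q-1$ forces $\gcd(d,p)=1$, the hypothesis $\gcd(r_i,p)=1$ yields $\gcd(r_id,p)=1$, so $\bigl|\sum_{y\in\fq}\psi(P_i(y))\bigr|\leq(r_id-1)\sqrt{q}$. Triangle-summing over the $d-n_0$ indices and using $r_i\leq r$ gives the claimed bound $(d-n_0)r\sqrt{q}$; the slightly sharper $(d-n_0)(dr-1)/d\cdot\sqrt{q}$ noted in the paper comes from retaining the $-1/d$ term instead of discarding it.

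For the monomial case $R_i(x)=x^{r_i}$, each $P_i$ becomes a pure power $a_i\gamma^{ir_i}y^{r_id}$. I would then use the standard identity
\[
\sum_{y\in\fq}\psi(cy^m)=\sum_{y\in\fq}\psi\bigl(cy^{\gcd(m,q-1)}\bigr),
\]
which holds because $y\mapsto y^m$ and $y\mapsto y^{\gcd(m,q-1)}$ have the same image subgroup of $\fq^*$ with the same fibre sizes. This reduces the effective exponent from $r_id$ to $\gcd(r_id,q-1)=d\gcd(r_i,s)$ where $s=(q-1)/d$; applying the Weil bound to the reduced monomial and maximising over $i\in L$ yields the sharper bound $(d-n_0)\max_{i\in L}\gcd(r_i,s)\sqrt{q}$. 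The only delicate point I anticipate is the boundary bookkeeping in the first step — verifying that the $\psi(0)$ contributions at $x=0$ and at each $y=0$ telescope exactly with the shift $n_0q/d$ — but neither that nor the Weil applications require anything beyond a direct computation.
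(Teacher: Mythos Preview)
Your proposal is correct and matches the paper's proof essentially step for step: the same coset parametrisation $x=\gamma^i y^d$, the same split into $i\in L$ and $i\notin L$, the same use of $R_i(0)=0$ to extend the $y$-sum to all of $\fq$, the same Weil application to $a_iR_i(\gamma^i y^d)$, and the same exponent reduction $r_id\to\gcd(r_id,q-1)=d\gcd(r_i,s)$ in the monomial case (the paper achieves this via the substitution $y\mapsto y^k$ with $\gcd(k,q-1)=1$ rather than your fibre-counting remark, but the two are equivalent). Your explicit identity $\sum_{x\in\fq}\psi(g(x))-\tfrac{q}{d}n_0=\tfrac{1}{d}\sum_{i\in L}\sum_{y\in\fq}\psi(a_iR_i(\gamma^iy^d))$ is in fact slightly cleaner than the paper's presentation, which carries the absolute value from the outset.
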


\begin{proof}
We recall $\gamma$ is a fixed primitive element of $\mathbb{F}_q$ and $\zeta = \gamma^{(q-1)/d}$ be a primitive $d$-th root of unity.  Because $d\mid (q-1)$, we must have $\gcd(d, p)=1$. For $x\in C_i = \gamma^i C_0$, write $x = \gamma^i y^{d}$ for some $y\in \fq^*$ and then $g(x) = f_i(\zeta^i) R_i(\gamma^{i} y^{d}) $. Let $a_i = f_i(\zeta^i)$.  We have 

\begin{eqnarray*}
\left|  \sum_{x\in \fq} \psi(g(x)) -\frac{q}{d} n_0 \right| &=& \left| \frac{1}{d} \sum_{i=0}^{d -1} \left( 1+ \sum_{y\in \fq^*}   \psi( f_i(\zeta^i) R_i(\gamma^{i} y^{d}))  \right) -\frac{q}{d} n_0 \right| \\
&\leq&   \left| \frac{1}{d} \sum_{i \in L} \left( 1+ \sum_{y\in \fq^*}   \psi( f_i(\zeta^i) R_i(\gamma^{i} y^{d}))  \right) \right| \\
&& + \left| \frac{1}{d} \sum_{i \not\in L} \left( 1-q +\sum_{y\in \fq^*}   \psi( f_i(\zeta^i) R_i(\gamma^{i} y^{d})) \right) \right| \\
&= & \frac{1}{d}   \sum_{ i\in L}  \left| \sum_{y\in \fq}   \psi(a_iR_i(\gamma^{i} y^{d})) \right|.
\end{eqnarray*}

If all the degrees of polynomials $R_i(x)$ are less than or equal to $r$, then the Weil bound implies  Equation~(\ref{indexbound}).  Indeed,  because $\gcd(dr_i, p)=1$, we must have 
\begin{eqnarray*}
\frac{1}{d}   \sum_{ i\in L}  \left| \sum_{y\in \fq}   \psi(a_iR_i(\gamma^{i} y^{d})) \right|
&\leq & \frac{d-n_0}{d} (dr-1) \sqrt{q} \\
&\leq & (d-n_0)r \sqrt{q}.
\end{eqnarray*}

Moreover, if $R_i(x) = x^{r_i}$ for $0\leq i\leq d-1$, then   $g(x) =  f_i(\zeta^i) \gamma^{ir_i} y^{d r_i} $. Moreover, if we replace $y$ by $y^k$ such that $\gcd(d r_i, q-1) = k d r_i + b(q-1)$ and $\gcd(k, q-1)=1$ in the sum $\left|  \sum_{y\in \fq}   \psi(y^{d r_i} ) \right|$, we can reduce the degree of the monomial $y^{d r_i }$ in the sum to $\gcd(d r_i, q-1)$. Therefore, we obtain 

\begin{eqnarray*}
&& \frac{1}{d}   \sum_{ i\in L}  \left|  \sum_{y\in \fq}  \psi(a_iR_i(\gamma^{i} y^{d})) \right| \\
&\leq &   \frac{1}{d}   \sum_{ i\in L}  \left( \gcd(dr_i, q-1) -1 \right)\sqrt{q} \\
&\leq &   \frac{d-n_0}{d}   \max_{ i\in L} \{  \gcd(dr_i, q-1) - 1\} \sqrt{q} \\
&\leq &  (d-n_0)   \max_{ i\in L}  \{ \gcd(r_i, \frac{q-1}{d})\} \sqrt{q}. 
\end{eqnarray*}

\end{proof}

As a result, for any polynomial with index $\ell$ and vanishing order $r$ at $0$ such that $\gcd(r, p)=1$, if both $\ell$ and $\gcd(r, \frac{q-1}{\ell})$ are small,  we obtain a nontrivial bound for its character sum. This provides a partial answer to Problem 1 because many of these polynomials have large degrees which give the trivial Weil bound.  For example, let $g(x) = x^{2(q-1)/3 + 1} + x^{(q-1)/3+1} + x$ over $\fq$ with characteristic $p > 3$. Then the Weil bound gives the trivial result. However, we note that $g(x)$ has index $\ell = 3$, $n_0=2$,  and $r=1$. By Theorem~\ref{indexThm}, we have $\left|  \sum_{x\in \fq} \psi(g(x)) - \frac{2q}{3}  \right| \leq  \sqrt{q}$.

\begin{corol}
Let $g(x) = x^n + ax^r \in \fq[x]$ with $a \in \fq^*$ and $q-1 \geq n > r\geq 1$. 
Let  $\ell = \frac{q-1}{\gcd{(n-r, q-1)}}$,  $t = \gcd{(n, r, q-1)}$, and $u=\gcd(n-r, \ell)$.  
%Let $\gamma$ be a fixed primitive element in $\fq$. 
% and $k = \log_{\gamma} (-a)$ be the discrete logarithm of $-a$. 
 Let $\psi :\fq \rightarrow \mathbb{C}^*$ be a nontrivial additive character.  
 If %$\frac{(q-1)u}{\ell} \mid k$ then
 $x^{n-r} + a$ has a solution in the subset of all $\ell$-th roots of unity of $\fq$, then
\begin{equation}
\left|  \sum_{x\in \fq} \psi(x^n+ax^r) - \frac{qu}{\ell} \right| \leq (\ell-u)t \sqrt{q}, 
\end{equation}
otherwise, 
\begin{equation}
\left|  \sum_{x\in \fq} \psi(x^n+ax^r) \right| \leq \ell t \sqrt{q}. 
\end{equation} 
\end{corol}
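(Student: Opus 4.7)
The plan is to apply the pure-monomial refinement in the second bound of Theorem~\ref{main} to $g(x)=x^n+ax^r$, taking the cyclotomic parameter $d$ equal to the index $\ell$ of $g$. Writing $g(x)=x^r(x^{n-r}+a)=x^r f(x^s)$ with $s=(q-1)/\ell$ and $f(y)=y^m+a$, $m=(n-r)/s$, one first checks that $\gcd(m,\ell)=1$, so $\ell$ is indeed the index of $g$. Then for each $i$ and $x\in C_i$, parameterizing $x=\gamma^i y^\ell$ and using the identity $\ell(n-r)=m(q-1)$ to get $y^{\ell(n-r)}=1$, the power $x^{n-r}=\zeta^{im}$ is constant on each coset (where $\zeta=\gamma^s$ is a primitive $\ell$-th root of unity). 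Consequently $g|_{C_i}(x)=(\zeta^{im}+a)x^r$ has the pure-monomial form $a_i R_i(x)$ demanded by Theorem~\ref{main}, with $R_i(x)=x^r$ and $a_i=\zeta^{im}+a$ for every $i$.

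The key step is to match the vanishing-coset count $n_0=\#\{i:a_i=0\}$ with $u$. The condition $a_i=0$ reads $\zeta^{im}=-a$, and the corollary's hypothesis that $x^{n-r}+a$ has a solution among the $\ell$-th roots of unity is equivalent to $-a\in(\mu_\ell)^{n-r}=\mu_{\ell/u}$ (using that the $(n-r)$-th power map on $\mu_\ell$ has kernel $\mu_u$). When it holds, there are exactly $u$ solutions $\xi\in\mu_\ell$ of $\xi^{n-r}=-a$; after passing through the identification $\xi\leftrightarrow\zeta^i$ and the bijection $i\mapsto im\pmod\ell$ (justified by $\gcd(m,\ell)=1$), these yield $u$ values of $i$ with $a_i=0$, giving $n_0=u$; in the opposite case no such $i$ exists and $n_0=0$.

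Once this is established the rest is substitution. The maximum $\max_{i\in L}\gcd(r_i,(q-1)/d)$ reduces to $\gcd(r,s)$, which equals $t=\gcd(n,r,q-1)$ by a short mutual-divisibility argument (using $s\mid n-r$). Plugging $d=\ell$ and $n_0=u$ into the second bound of Theorem~\ref{main} gives $\left|\sum_{x\in\fq}\psi(g(x))-qu/\ell\right|\leq(\ell-u)t\sqrt{q}$ in the ``has a solution'' case, while $n_0=0$ produces $\left|\sum_{x\in\fq}\psi(g(x))\right|\leq\ell t\sqrt{q}$ in the opposite case, matching both claims of the corollary.

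The main obstacle is the identification $n_0=u$: because $\gcd(m,\ell)=1$ forces $i\mapsto\zeta^{im}$ to be a bijection on $\{0,\dots,\ell-1\}$, a direct count within the $d=\ell$ decomposition seems to return only $n_0\in\{0,1\}$. Recovering the correct multiplicity $u$ requires either passing to the finer cyclotomic decomposition of index $d=\ell u$ (valid since $u\mid s$) and verifying that $\gcd(r,(q-1)/d)$ still reduces to $t$, or arguing directly on each vanishing fibre via the $u$-fold covering $\mu_\ell\twoheadrightarrow\mu_{\ell/u}$; this bookkeeping is the one place where care is required beyond routine substitution.
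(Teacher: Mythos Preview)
Your overall plan is exactly the paper's: write $g(x)=x^r f(x^s)$ with $s=(q-1)/\ell$ and $f(y)=y^{e_0}+a$ where $e_0=(n-r)/s$, apply Theorem~\ref{indexThm}, check $\gcd(r,s)=\gcd(n,r,q-1)=t$, and count $n_0$. The only difference is the count itself: the paper writes $n_0=\#\{\,i:(\zeta^i)^{\,n-r}+a=0\,\}$ and then solves the congruence $i(n-r)\equiv k/s\pmod{\ell}$ to obtain $n_0\in\{0,u\}$.

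Your observation that the exponent in $f(\zeta^i)$ is $e_0$, not $n-r$, and that $\gcd(e_0,\ell)=1$ forces $n_0\in\{0,1\}$, is correct; the paper's replacement of $e_0$ by $n-r$ is a slip. In fact the corollary as stated is false whenever $u>1$. Take $q=17$, $n=5$, $r=1$, $a=-1$: then $s=\ell=4$, $e_0=1$, $t=1$, and $u=\gcd(4,4)=4$. Every $\xi\in\mu_4$ satisfies $\xi^{4}-1=0$, so the hypothesis holds, and the claimed inequality reads $\bigl|\sum_{x\in\F_{17}}\psi(x^5-x)-17\bigr|\le 0$, forcing $x^5-x\equiv 0$ on $\F_{17}$; but $2^5-2\equiv 13\pmod{17}$.

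So you should not try to manufacture the factor $u$. Your proposed refinement to $d=\ell u$ does give $u$ vanishing cosets, but then $qn_0'/d=q/\ell$ and the error term becomes $(\ell u-u)\gcd\!\bigl(r,s/u\bigr)\sqrt{q}$, which is \emph{weaker} than the bound $(\ell-1)t\sqrt{q}$ already delivered by $d=\ell$; it cannot reach $(\ell-u)t\sqrt{q}$. The correct corollary is the one your direct computation proves: replace $u$ by $1$ throughout, and phrase the dichotomy as ``$-a$ is an $\ell$-th root of unity in $\fq$'' (equivalently, $f$ has a root in $\mu_\ell$).
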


\begin{proof} First we note that $\gcd(r, \frac{q-1}{\ell}) = \gcd(r, \gcd{(n-r, q-1)}) =  \gcd{(n, r, q-1)} = t$. 
Let $\zeta$ be a primitive $\ell$-th root of unity and $n_0 = \# \{ 0 \leq i \leq \ell -1 \mid (\zeta^i)^{n-r} + a =0 \}$. By Theorem~\ref{indexThm} we have
 \begin{equation}
\left|  \sum_{x\in \fq} \psi(g(x)) - \frac{q}{\ell} n_0 \right| \leq (\ell-n_0)t \sqrt{q}. 
\end{equation}
Suppose $-a = \gamma^k$ for a fixed primitive element $\gamma$. If $\zeta^{i(n-r)} = \gamma^{k}$, then we have $i(n-r)s \equiv k \pmod{q-1}$ where $s = \frac{q-1}{\ell}$. This linear congruence has a solution only if $s \mid k$. In this case, it reduces to $i(n-r) \equiv k/s \pmod{\ell}$ and thus $i(n-r) \equiv k/s \pmod{\ell}$ has exactly $u=\gcd(n-r, \ell)$ solutions for $i$. Therefore, $n_0=u$ if $us \mid k$ and $n_0 =0$ otherwise. Hence we obtain either
\begin{equation}
\left|  \sum_{x\in \fq} \psi(x^n+ax^r) - \frac{qu}{\ell} \right| \leq (\ell-u)t \sqrt{q}, 
\end{equation}
or 
\begin{equation}
\left|  \sum_{x\in \fq} \psi(x^n+ax^r) \right| \leq \ell t \sqrt{q}. 
\end{equation} 
\end{proof}

We remark that  $x^{n-r} + a$ has a solution in the subset of all $\ell$-th roots of unity of $\fq$ if and only if $\frac{(q-1)u}{\ell} \mid k$ where $k = \log_{\gamma} (-a)$ is the discrete logarithm of $-a$.  Otherwise, we have the index bound $\ell t \sqrt{q}$ for binomials $x^n +a x^r$. Because
$t = \gcd(n, r, q-1)$ can easily achieve $1$, our bound for many binomials is essentially 
$\ell \sqrt{q}$. We note that if $\ell < \sqrt{q} -1$, then  $\ell < \frac{q-1}{\ell} \leq n-1$ and thus our bound $\ell \sqrt{q}$ is better than the Weil bound $(n-1)\sqrt{q}$.

\section{Some applications}\label{sec3}

In this section, we remark some applications of our index bound in counting the numbers of solutions of some algebraic curves and the minimum weights of some cyclic codes.  Let  $g \in \fq[x]$ be a polynomial and  let $N_{g,q^m}$ be the number  of solutions $(x, y) \in \F_{q^m}^2$ of an Artin-Schreier equation $y^q- y = g(x)$. Then
\begin{equation}
N_{g, q^m} = \sum_{\psi_m} \sum_{x \in \F_{q^m}} \psi_m(g(x)), 
\end{equation}
where the outer sum runs over all additive character $\psi$ of $\fq$ and 
$\psi_m(x) = \psi(Tr(x))$, and $Tr$ denotes the trace from $\F_{q^m}$ to $\fq$. 

It is well known that if $g$ has degree $n$ with $\gcd(n, q) =1$, then the Weil bound gives 
\begin{equation}\label{Artin-Schreir}
|N_{g, q^m} - q^m | \leq (n-1)(q-1) q^{m/2}. 
\end{equation}
Improving the Weil bound for the Artin-Schreier curves has received a lot of recent attentions 
because of their applications in coding theory and computer sciences, see  \cite{CX} \cite{KL} \cite{RW} 
for more details. 

As a consequence of our earlier results with the assumption that $g$ has an index $\ell$ and vanishing order $r$ at $0$ such that $\gcd(r, p)=1$, we obtain the following improvement in a different direction. 

\begin{corol}  
Let  $g \in \F_{q^m}[x]$ be a polynomial with index $\ell$ and vanishing order $r$ at $0$ such that $\gcd(r, p)=1$.  Let $n_0$ be defined as in Theorem~\ref{indexThm} and $N_{g,q^m}$ be the number  of solutions $(x, y) \in \F_{q^m}^2$ of an Artin-Schreier equation $y^q- y = g(x)$. Then

\begin{equation}\label{indexboundArtin-Schreier}
\left|  N_{g, q^m} - q^m  - \frac{(q-1)q^m}{\ell} n_0 \right| \leq (q-1) (\ell-n_0)\gcd(r, \frac{q^m-1}{\ell}) q^{m/2}. 
\end{equation}

\end{corol}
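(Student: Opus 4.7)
The plan is to apply the additive-character formula for the point count of the Artin-Schreier curve already recorded at the start of Section~\ref{sec3}, namely
\[
N_{g, q^m} \;=\; \sum_{\psi} \sum_{x \in \F_{q^m}} \psi_m(g(x)),
\]
where $\psi$ ranges over all additive characters of $\fq$ and $\psi_m(x) = \psi(\mathrm{Tr}(x))$ is its lift to $\F_{q^m}$. The first step is to isolate the contribution of the trivial character $\psi_0$, which contributes exactly $q^m$, so that
\[
N_{g, q^m} - q^m \;=\; \sum_{\psi \neq \psi_0} \sum_{x \in \F_{q^m}} \psi_m(g(x)).
\]

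The second step is to apply Theorem~\ref{indexThm} to each of the $q-1$ inner sums. For any nontrivial $\psi$, the lift $\psi_m$ is a nontrivial additive character of $\F_{q^m}$, so Theorem~\ref{indexThm} applies with the ground field taken to be $\F_{q^m}$ (that is, $q$ replaced by $q^m$). Since by hypothesis $g$ has index $\ell$ and vanishing order $r$ at $0$ over $\F_{q^m}$ with $\gcd(r,p)=1$, and $n_0$ is the corresponding count, the theorem produces
\[
\left| \sum_{x \in \F_{q^m}} \psi_m(g(x)) - \frac{q^m}{\ell} n_0 \right| \;\leq\; (\ell - n_0)\,\gcd\!\left(r, \frac{q^m-1}{\ell}\right) q^{m/2}.
\]

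The third and final step is to sum over the $q-1$ nontrivial characters: the main terms collect to $\tfrac{(q-1)q^m}{\ell} n_0$, and the triangle inequality multiplies the error bound by $q-1$, yielding (\ref{indexboundArtin-Schreier}). There is no real obstacle here; the only point to verify is that the Weil-type hypothesis $\gcd(r,p)=1$ in Theorem~\ref{indexThm} is preserved under base change to $\F_{q^m}$, which is immediate since $p$ is the common characteristic. The proof is therefore a direct character-sum application of Theorem~\ref{indexThm} over the extension field.
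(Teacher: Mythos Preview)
Your proposal is correct and matches the paper's approach exactly: the paper presents this corollary as an immediate consequence of the character-sum formula $N_{g,q^m}=\sum_{\psi}\sum_{x\in\F_{q^m}}\psi_m(g(x))$ combined with Theorem~\ref{indexThm} applied over $\F_{q^m}$, without spelling out any further details. You have simply written out the three routine steps (separate the trivial character, apply Theorem~\ref{indexThm} to each of the $q-1$ nontrivial $\psi_m$, and sum via the triangle inequality) that the paper leaves implicit.
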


In particular, we have the following corollary.

\begin{corol}
Let $g(x) = x^n + a x^r \in \F_{q^m}[x]$ such that  $\gcd(r, p)=1$. Let $\ell = \frac{q^m-1}{\gcd(n-r, q^m-1)}$ and $t = \gcd(n, r, q^m-1)$.  Then the number of solutions $N_{g, q^m}$ of the curve $y^q - y = x^n + a x^r$ satisfies 
\begin{equation}\label{indexboundArtin-SchreierBin1}
\left|  N_{g, q^m} - q^m  \right| \leq (q-1) \ell t q^{m/2},
\end{equation}
except the case when %$ \frac{(q^m-1)\gcd(n-r, \ell)}{\ell} \mid k$ we have 
$x^{n-r}+a$ has a root in the set of $\ell$-th roots of unity in $\F_{q^m}$, in which case, we have 
\begin{equation}
\left|  N_{g, q^m} - q^m - \frac{(q-1)q^m \gcd(n-r, \ell)}{\ell} \right| \leq  (q-1)(\ell-1)t q^{m/2}, 
\end{equation}
\end{corol}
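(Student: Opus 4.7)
The plan is to reduce this to the earlier Corollary on character sums of binomials, applied this time over $\F_{q^m}$, by decomposing the Artin--Schreier solution count via additive characters and summing the per-character bounds over the $q-1$ nontrivial additive characters of $\F_q$.

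First I would use the identity
\[
N_{g,q^m}=\sum_{\psi}\sum_{x\in\F_{q^m}}\psi_m(g(x))=q^m+\sum_{\psi\neq 1}\sum_{x\in\F_{q^m}}\psi_m(g(x)),
\]
where $\psi$ ranges over the additive characters of $\F_q$ and $\psi_m=\psi\circ\mathrm{Tr}$. Surjectivity of the trace guarantees that $\psi_m$ is nontrivial on $\F_{q^m}$ exactly when $\psi$ is nontrivial on $\F_q$, reducing the estimate on $N_{g,q^m}-q^m$ to bounding $q-1$ character sums of $x^n+ax^r$ over $\F_{q^m}$.

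Next I would invoke the earlier binomial Corollary with $\F_{q^m}$ in place of $\F_q$: the hypothesis $\gcd(r,p)=1$ carries over, and the relevant parameters become exactly $\ell=(q^m-1)/\gcd(n-r,q^m-1)$ and $t=\gcd(n,r,q^m-1)$ as in the statement. The dichotomy in that Corollary depends only on whether $x^{n-r}+a$ has a zero among the $\ell$-th roots of unity in $\F_{q^m}$, which is a property of $(n,r,a,q^m)$ and not of the character, so the same case applies uniformly to every nontrivial $\psi$ and the factor $q-1$ can be pulled cleanly out of the sum.

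Finally I would sum the per-character bounds. In the case with no such root, each term is at most $\ell t\,q^{m/2}$, and the triangle inequality yields $|N_{g,q^m}-q^m|\le(q-1)\ell t\,q^{m/2}$. In the root case, each term satisfies $|\sum_x\psi_m(g(x))-q^m u/\ell|\le(\ell-u)t\,q^{m/2}$ with $u=\gcd(n-r,\ell)$; summing over the $q-1$ nontrivial characters yields $|N_{g,q^m}-q^m-(q-1)q^m u/\ell|\le(q-1)(\ell-u)t\,q^{m/2}$, and the slightly weaker bound $(q-1)(\ell-1)t\,q^{m/2}$ in the statement then follows from $u\ge 1$. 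I expect the only real care point to be verifying that the dichotomy of the binomial Corollary is genuinely character-independent so the bound is uniform across $\psi$; beyond that the argument is essentially bookkeeping, with no new analytic input beyond the preceding Corollary.
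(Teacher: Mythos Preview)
Your proposal is correct and matches the paper's (implicit) argument: the paper presents this result as an immediate specialization of the preceding Artin--Schreier corollary, which in turn was obtained by summing the index bound for character sums over the $q-1$ nontrivial additive characters, together with the computation from the binomial corollary that $\gcd(r,(q^m-1)/\ell)=t$ and $n_0\in\{0,u\}$ with $u=\gcd(n-r,\ell)$. Your observation that the dichotomy is character-independent is exactly the point that makes the factor $q-1$ pull out cleanly, and your remark that $(\ell-u)\le(\ell-1)$ accounts for the slightly weaker constant in the stated bound.
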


We note that $x^{n-r}+a$ has a root in the set of $\ell$-th roots of unity in $\F_{q^m}$ if and only if $\frac{(q^m-1)\gcd(n-r, \ell)}{\ell} \mid k$ where $k = \log_{\gamma} (-a)$ is the discrete logarithm of $-a$.
%Finally we remark that sometime a large index polynomial $g$ can also be written as a cyclotomic mapping of the smaller index $d$, in which case we can obtain a similar result as before.  

Finally we comment on some applications on cyclic codes. Let $C$ be a cyclic code of length $N$ over $\fq$ with $\gcd(N, q)=1$. Let $\F_{q^m}$ be the splitting field of the polynomial $x^N-1$ over $\fq$ and $Tr$ be the trace function from $\F_{q^m}$ to $\fq$.  Let $\beta$ be a primitive $N$-th root of unity. 
%Suppose that $C$ is defined by a generator polynomial $f(x)$ in $\F_{q^m}[x]$. 
Fix a subset $J$ of the set $\{0,1,\ldots, N-1\}$ and let $h(x)=\prod_{j\in J} m_{\beta^j}(x)$ be the generator polynomial of $C^\perp$, the orthogonal code of $C$,  where  $m_\gamma(x)$ is the minimal polynomial of $\gamma$ in $\F_{q^m}$. Then $C$ consists of the words 
\[
c_a(x)=\sum_{i=0}^{N-1} Tr(g_a(\beta^i))x^i,
\]
where $g_a(x)=\sum_{j\in J} a_j x^j$ and $a=(a_j)_{j\in J} \in (\F_{q^m})^u$ with $u= |J|$. Here $J$ is called $\beta$-check set. The weight $w(a)$ of $c_a(x)$ is given by $N-z(a)$, with $z(a)=\#\{i \mid 0\leq i \leq N-1,  Tr(g_a(\beta^i))=0 \}$.  
Let $N_1$ be the number of solutions $x\in \F_{q^m}$ of the equation  $Tr(g(x))=0$ and let $N_2$ be the number of solutions $(x,y) \in \F_{q^m}^2$ of the equation $y^q-y=g(x)$, where $g(x) \in \F_{q^m}[x]$. It is clear that $N_2=qN_1$. Using the classical Weil-Serre bound, Wolfmann \cite{Wolfmann} provided some general bounds for the mininum weights of some cyclic codes. Here we can similarly give an index bound for the minimum weights of some of these cyclic codes. 

Let $k$ be the integer such that $Nk = q^m-1$. The set of all $N$-th roots of unity over $\fq$ is also the set of $k$-powers of $\F_{q^m}^*$. Therefore $z(a)$ is the number of $x^k$ in $\F_{q^m}^*$ such that $Tr(g_a(x^k))=0$. Consider $E_k = \{ x \in \F_{q^m}^* \mid Tr(g_a(x^k)) = 0 \}$. Obviously $E_k$ is the union of $z(a)$ distinct classes modulo $G_k$, where $G_k$ is the subgroup of $\F_{q^m}^*$ of order $k$. Hence $|E_k| = kz(a) = k(N-w(a)) = q^m-1 - kw(a)$. Let $N_3$ be the number of solutions $x\in \F_{q^m}$ of the equation $Tr(g_a(x^k)) = 0$. Then $N_3 = |E_k| = q^m-1 - kw(a)$ if $Tr(g_a(0)) \neq 0$ and $N_3 = |E_k|+1 = q^m -kw(a)$ if $Tr(g_a(0)) = 0$. Combining the above discussions with Equation~(\ref{indexboundArtin-Schreier}), we obtain

\begin{corol} 
Let $\F_{q^m}$ be the splitting field of the polynomial $x^N-1$ over $\fq$ with $Nk = q^m-1$ and $\beta$ a primitive $N$-th root of unity over $\fq$. Let $C$ be a cyclic code of length $N$ over $\fq$ with 
 $J$ as $\beta$-check set. Let $\zeta$ be a primitive $\ell$-th root of unity. % with $\theta = sup J$. 
  If each nonzero member of $J$ is prime to $q$ and $g_a(x^k) = \sum_{j\in J} a_j x^{kj} = x^rf_a(x^{(q^m-1)/\ell})+b$ has index $\ell$ and vanishing order $r$ at $0$. 
  Let $n_0= \#\{0\leq i \leq \ell-1 \mid f_a(\zeta^i) =0 \}$. 

(a) If $0\in J$ and $Tr(b) =0$,  then the  weight $w(a)$ of $c_a(x)$ satisfies
\[
 \left| w(a) - \frac{q^m-q^{m-1}}{k} + \frac{(q-1)q^{m-1} n_0}{k\ell} \right| \leq \frac{(q-1)(\ell-n_0)\gcd(r, \frac{q^m-1}{\ell})}{kq} q^{m/2}.
\]

(b) If either $0 \not \in J$ or $0\in J$ and $Tr(b)\neq 0$ then the weight $w(a)$ of $c_a(x)$ satisfies
\[
 \left| w(a) - \frac{q^m-q^{m-1}-1}{k} + \frac{(q-1)q^{m-1} n_0}{k\ell} \right| \leq \frac{(q-1)(\ell-n_0)\gcd(r, \frac{q^m-1}{\ell})}{kq} q^{m/2}.
\]
\end{corol}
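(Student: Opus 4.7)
The plan is to translate the weight $w(a)$ into the solution count of an Artin--Schreier curve and then invoke the index bound (\ref{indexboundArtin-Schreier}) proved earlier in the section. The chain of identities leading to $|E_k| = kz(a) = q^m-1-kw(a)$ is already in place in the discussion preceding the statement, so the first step is to match $|E_k|$ with a full count over $\F_{q^m}$. Letting $N_3$ denote the number of $x\in \F_{q^m}$ satisfying $Tr(g_a(x^k))=0$, we have $N_3=|E_k|+1$ when $Tr(g_a(0))=0$ and $N_3=|E_k|$ otherwise. Since $g_a(0)=b$ (with $b=0$ automatically when $0\notin J$), this dichotomy corresponds exactly to the case split between (a) and (b).

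Next, I would pass to the Artin--Schreier count $N_2 = \#\{(x,y)\in\F_{q^m}^2 : y^q - y = g_a(x^k)\}$ via the standard identity $N_2 = qN_3$, which uses that the additive map $y\mapsto y^q-y$ has kernel $\fq$ and image equal to $\ker(Tr\colon\F_{q^m}\to\fq)$. The polynomial $g_a(x^k)$ is assumed to have index $\ell$ and vanishing order $r$ at $0$, so to apply (\ref{indexboundArtin-Schreier}) I only need to verify $\gcd(r,p)=1$. Since $Nk=q^m-1$ and $\gcd(N,q)=1$ force $\gcd(k,q)=1$, and $r=k\cdot j_{\min}$ where $j_{\min}$ is the smallest $j\in J\setminus\{0\}$ with $a_j\neq 0$, the hypothesis that every nonzero member of $J$ is prime to $q$ then yields $\gcd(r,p)=1$.

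With these ingredients in hand, applying (\ref{indexboundArtin-Schreier}) to $g_a(x^k)$, dividing both sides by $kq$, and substituting the appropriate relation from the first step ($N_3 = q^m - kw(a)$ in case (a), $N_3 = q^m - 1 - kw(a)$ in case (b)) converts the Artin--Schreier bound directly into the two displayed inequalities.

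The only real obstacle is careful bookkeeping around the $+1$ from $x=0$ and the verification that the hypotheses of (\ref{indexboundArtin-Schreier}) are met for $g_a(x^k)$; the substantive analytic estimate has already been done in the preceding Artin--Schreier corollary, so this final result is essentially a rescaling of that bound.
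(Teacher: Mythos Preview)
Your approach is exactly the one the paper uses: the discussion preceding the corollary already establishes $|E_k|=q^m-1-kw(a)$, the relation between $N_3$ and $|E_k|$ depending on whether $Tr(g_a(0))=0$, and $N_2=qN_3$; the corollary is then obtained, as you do, by applying (\ref{indexboundArtin-Schreier}) to $g_a(x^k)$ and dividing through by $kq$. Your added verification that $\gcd(r,p)=1$ (via $r=kj_{\min}$ with $\gcd(k,p)=\gcd(j_{\min},p)=1$) is a welcome detail that the paper leaves implicit.
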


Therefore, if $k\cdot J= \{r, r+\frac{q^m-1}{\ell},  \ldots, r + \frac{(\ell-1)(q^m-1)}{\ell} \}$ such that $0< r< \frac{q^m-1}{\ell}$ and each member of $J$ is relatively prime to $q$, we can estimate an lower bound the minimum weight of the corresponding cyclic code. 
Because $1\leq n_0 \leq \ell-1$ for all nonzero codewords $g_a(x)$, we therefore obtain the weight of $c_a(x)$ is at least
\begin{eqnarray*}
w(a) &\geq & \frac{q^m-q^{m-1}-1}{k} - \frac{(q-1)q^{m-1} n_0}{k\ell} - \frac{(q-1)(\ell-n_0)\gcd(r, \frac{q^m-1}{\ell})}{kq} q^{m/2} \\
& \geq & \frac{q^m-q^{m-1}-1}{k} - \frac{(q-1)q^{m-1} (\ell-1) }{k\ell} - \frac{(q-1)(\ell-1)\gcd(r, \frac{q^m-1}{\ell})}{kq} q^{m/2}\\
&\geq & \frac{(q-1)q^{m-1}}{k\ell} - \frac{(q-1)(\ell-1)\gcd(r, \frac{q^m-1}{\ell})}{kq} q^{m/2} - \frac{1}{k}.
\end{eqnarray*}
Therefore the minimum weights of these cyclic codes are quite large when $m$ is large.

\end{document}